\theoremstyle{plain}
\newtheorem{theorem}{Theorem}[section]
\crefname{theorem}{Theorem}{Theorems}
\Crefname{theorem}{Theorem}{Theorems}
\newtheorem{proposition}[theorem]{Proposition}
\crefname{proposition}{Proposition}{Propositions}
\Crefname{proposition}{Proposition}{Propositions}
\newtheorem{lemma}[theorem]{Lemma}
\crefname{lemma}{Lemma}{Lemmas}
\Crefname{lemma}{Lemma}{Lemmas}
\newtheorem{corollary}[theorem]{Corollary}
\crefname{corollary}{Corollary}{Corollaries}
\Crefname{corollary}{Corollary}{Corollaries}
\crefname{claim}{Claim}{Claims}
\Crefname{claim}{Claim}{Claims}
\crefname{property}{Property}{Properties}
\Crefname{property}{Property}{Properties}
\crefname{problem}{Problem}{Problems}
\Crefname{problem}{Problem}{Problems}
\theoremstyle{definition}
\newtheorem{definition}[theorem]{Definition}
\crefname{definition}{Definition}{Definitions}
\Crefname{definition}{Definition}{Definitions}
\crefname{notation}{Notation}{Notations}
\Crefname{notation}{Notation}{Notations}
\crefname{convention}{Convention}{Conventions}
\Crefname{convention}{Convention}{Conventions}
\crefname{condition}{Condition}{Conditions}
\Crefname{condition}{Condition}{Conditions}
\crefname{assumption}{Assumption}{Assumptions}
\Crefname{assumption}{Assumption}{Assumptions}
\theoremstyle{remark}
\crefname{remark}{Remark}{Remarks}
\Crefname{remark}{Remark}{Remarks}
\crefname{example}{Example}{Examples}
\Crefname{example}{Example}{Examples}
\crefname{section}{Section}{Sections}
\Crefname{section}{Section}{Sections}
\crefname{subsection}{Subsection}{Subsections}
\Crefname{subsection}{Subsection}{Subsections}
\crefname{figure}{Figure}{Figures}
\Crefname{figure}{Figure}{Figures}
\newtheorem*{acknowledgement}{Acknowledgement}
\newcommand{\Z}{\mathbb{Z}}
\newcommand{\Q}{\mathbb{Q}}
\newcommand{\R}{\mathbb{R}}
\newcommand{\C}{\mathbb{C}}
\newcommand{\q}{\mathbb{H}}
\newcommand{\ind}{\mathrm{ind}}
\newcommand{\Slash}[1]{{\ooalign{\hfil/\hfil\crcr$#1$}}}
\begin{document}

\title{Boundary Dehn twists on Milnor fibers and Family Bauer--Furuta invariants}
\author{Jin Miyazawa}
\address{Reserch Institute for Mathematical Sciences, Kyoto University, Kyoto 606-8502, Japan}
\email{miyazawa.jin.5a@kyoto-u.ac.jp}
\begin{abstract}
    We proved that the boundary Dehn twist on the Milnor fiber $M_c(2, q, r)$ is an exotic diffeomorphism relative to the boundary if $q, r$ are odd, coprime integers bigger than $3$ and $(q-1)(r-1)/4$ is an odd number. The proof is given by comparing the family relative Bauer--Furuta invariants of the mapping torus. 
\end{abstract}
\maketitle
\section{Introduction}
Let $X$ be a $4$-manifold and $f$ be a diffeomorphism $f\colon X \to X$.If $f$ represents a non-trivial element of the kernel of the map
\[
\pi_0(\text{Diff}(X)) \to \pi_0(\text{Homeo}(X)), 
\]
then $f$ is said to be an exotic diffeomorphism. Moreover, if $\partial X \neq \emptyset$ and a diffeomorphism $f$ which is the identity on the boundary represents a non-trivial element of the kernel of the map
\[
    \pi_0(\text{Diff}(X, \partial X)) \to \pi_0(\text{Homeo}(X, \partial X)), 
\]
then $f$ is said to be an exotic diffeomorphism relative to the boundary.  The first example of an exotic diffeomorphism is given by Ruberman \cite{ruberman1998obstruction}. 

Recently, some examples of exotic diffeomorphisms are given by Dehn twist along three manifolds embedded in $4$-manifolds. Some examples of exotic diffeomorphisms relative to the boundary are given by boundary Dehn twists (see \cref{examples}). 
Kronheimer--Mrowka \cite{kronheimer2020dehn} proved that the Dehn twist along $S^3$ in the neck of the connected sum $K3\# K3$ is an exotic diffeomorphism and 
Lin \cite{lin2023isotopy} proved that that Dehn twist remains exotic after taking connected sum with $S^2 \times S^2$. 
Baraglia--Konno \cite{baraglia2022bauer} proved that the boundary Dehn twist on $K3 \setminus B^4$ is an exotic diffeomorphism relative to the boundary. Qiu \cite{qiu2024dehn} proved that the Dehn twist along $S^3$ in the neck of the connected sum $X_1 \# X_2$ where $X_1$ and $X_2$ are homology $4$-torus is exotic. 

The above examples are studies of the Dehn twists along  $S^3$ in $4$-manifolds. Other interesting examples, which are main topics of this paper, are the boundary Dehn twist along Seifert-fibered $3$-manifolds. 
These Dehn twist are also studied recently. For example, 
Konno--Mallick--Taniguchi \cite{konno2023exotic},  
Konno--Lin--Mukherjee--Mu{\~n}oz-Ech{\'a}niz \cite{konno2024four},
and Kang--Park--Taniguchi \cite{kang2024exotic} proved exoticness of such kinds of diffeomorphisms relative to the boundary under some conditions. 

In this paper, we study boundary Dehn twists on the Milnor fiber. 
Let us state the main applications of this paper.

\begin{theorem}\label{milnor_fiber}
    Let $M_c(p, q, r)$ be the Milnor fiber that is given by
    \[
        M_c(p, q, r)=\{(x, y, z) \in \C^3\mid x^{p}+y^{q}+z^{r}=\epsilon, \; \lvert x \rvert^2+\lvert y \rvert^2+\lvert z \rvert^2 \le 1 \}
    \]    
    for some small $\epsilon \in \C \setminus 0$. 
    If $p=2$ and $q, r$ are odd, coprime, and 
    \[
    \frac{(q-1)(r-1)}{4}
    \] is an odd number, then the boundary Dehn twist on $M_c(2, q, r)$ is an exotic diffeomorphism relative to the boundary. 
\end{theorem}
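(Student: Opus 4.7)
The strategy is to show separately that the boundary Dehn twist $\tau$ on $M := M_c(2,q,r)$ is topologically isotopic to $\mathrm{id}_M$ rel boundary, and that smoothly it is not. The topological part should be standard in this setting: $\tau$ is supported in a collar of $\partial M = \Sigma(2,q,r)$ and is realized there by the Seifert $S^1$-rotation, which becomes null-homotopic in $\mathrm{Homeo}(\Sigma(2,q,r))$ after applying Freedman's topological surgery, following the template of Baraglia--Konno and subsequent work. The bulk of the paper is devoted to smoothly distinguishing $\tau$ from the identity.

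The detection invariant will be the family relative Bauer--Furuta invariant of the mapping torus
\[
    E_\tau \;=\; \bigl(M \times [0,1]\bigr)\big/\bigl((x,0)\sim(\tau(x),1)\bigr) \;\longrightarrow\; S^1,
\]
whose fiberwise boundary is canonically the trivial family $\Sigma(2,q,r) \times S^1$. If $\tau$ were smoothly isotopic to $\mathrm{id}_M$ rel boundary, then $E_\tau$ would be fiberwise diffeomorphic rel boundary to the trivial bundle $M \times S^1$, so the family relative Bauer--Furuta invariants of the two, viewed as $S^1$-parameterized stable maps landing in the Seiberg--Witten Floer spectrum of $\Sigma(2,q,r)$, would agree. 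It therefore suffices to exhibit a difference. Since $p = 2$, the spin$^c$-structure on $M$ refines to a spin structure and the family Seiberg--Furuta map is $\mathrm{Pin}(2)$-equivariant; its target is Manolescu's $\mathrm{Pin}(2)$-equivariant Floer homotopy type $\mathrm{SWF}(\Sigma(2,q,r))$, whose structure for Brieskorn spheres is explicitly known and is the detection tool used in the Konno--Mallick--Taniguchi and Kang--Park--Taniguchi work cited in the introduction.

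The main calculation exploits the fact that $\tau$ is the monodromy of the natural loop in $\mathrm{Diff}(M,\partial M)$ coming from the Seifert $S^1$-action on $\Sigma(2,q,r)$ extending over $M$ via the Milnor fibration. This identifies $E_\tau$ with the $S^1$-associated bundle built from this action and reduces the family relative Bauer--Furuta invariant of $E_\tau$ to an $S^1$-equivariant version on $M$ itself. After a neck-stretching degeneration along $\Sigma(2,q,r)$, the difference between the invariant of $E_\tau$ and that of the trivial family should factor as the relative Bauer--Furuta invariant of $M$ composed with the loop of self-equivalences of $\mathrm{SWF}(\Sigma(2,q,r))$ induced by the Seifert rotation. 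The resulting class should live in a $\Z/2$-component of a $\mathrm{Pin}(2)$-equivariant parameterized stable stem, and the parity condition that $(q-1)(r-1)/4$ be odd, which is the mod $2$ reduction of $-\sigma(M)/8$ for the Milnor fiber $M_c(2,q,r)$, should be exactly the arithmetic input that forces this class to be non-zero.

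The main obstacle will be this last step. Carrying out the $\mathrm{Pin}(2)$-equivariant family Bauer--Furuta computation requires a concrete description of the loop of self-equivalences of $\mathrm{SWF}(\Sigma(2,q,r))$ coming from the Seifert rotation and a proof that its image in the relevant parameterized equivariant stable stem is controlled by the parity of $(q-1)(r-1)/4$. The hypotheses that $q,r > 3$ are odd and coprime are there precisely so that $\Sigma(2,q,r)$ is an integer homology sphere whose Floer homotopy type is tractable and the required mod $2$ identification can be carried out cleanly, while avoiding the low-dimensional degeneracies that would otherwise interfere with the neck-stretching and equivariant gluing arguments.
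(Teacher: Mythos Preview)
Your outline captures the high-level strategy---compare the family relative Bauer--Furuta invariant of the mapping torus $E_\tau$ with that of the trivial family---but it misses the paper's central computational device and misidentifies where the parity condition enters.

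The paper does \emph{not} analyze the loop of self-equivalences of $SWF(\Sigma(2,q,r))$ induced by the Seifert rotation. Instead, the key idea is to exhibit a \emph{square root} $\delta$ of the boundary Dehn twist $f$: one performs half the Dehn twist on the collar and glues it to the covering involution $\iota\colon (x,y,z)\mapsto(-x,y,z)$ on the interior, so that $\delta^2 = f$ and $\delta|_{\partial M}=\mathrm{id}$. The mapping torus of $\delta$ then pulls back along the double cover $\pi_2\colon S^1\to S^1$ to the mapping torus of $f$. Any $KO$-class pulled back through $\pi_2^*\colon KO^0(S^1)\to KO^0(S^1)$ has trivial $\widetilde{KO}^{-1}$-component; this forces the $\Z/2$-invariant $\psi_{\xi,-1}$ of the trivial family to vanish. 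The nontriviality of $\psi_{\xi,-1}$ for $E_\tau$ with its \emph{natural} framing then comes from a framing comparison: the pullback framing $\pi_2^*f_\delta$ differs from the natural framing $f_f$ on $H^+(\mathbb{X}_f)$ by a nontrivial element of $\pi_1(SO)$ precisely when the $(-1)$-eigenspace of $\delta^*=\iota^*$ on $H^+(M;\R)$ has dimension $\equiv 2 \pmod 4$. Since $\iota^*=-1$ on all of $H^+(M;\R)$, this eigenspace is the whole space, and the arithmetic condition is $b^+(M_c(2,q,r))\equiv 2 \pmod 4$. Using $b^+ = g(T(q,r)) + \tfrac{1}{2}\sigma(T(q,r))$ and the fact that $\sigma(T(q,r))\equiv 0 \pmod 8$, this reduces to $(q-1)(r-1)/4$ being odd. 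So the parity hypothesis controls $b^+\bmod 4$, not $-\sigma/8$ as you wrote.

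Two further ingredients you omit are essential. First, to extract a computable $\Z/2$ from the family map into $S^1_+\wedge SWF(Y)$, the paper pairs with the Iida--Taniguchi Floer-homotopy contact invariant $\Phi(Y,\xi)$ (this uses that $M_c(2,q,r)$ is K\"ahler with trivial canonical bundle, so the symplectic spin$^c$ structure is spin), collapsing the target to a sphere spectrum over $S^1$ where the $KO$-Thom-class pullback lands in $KO^0(S^1)\cong \Z\oplus\Z/2$. Second, the topological triviality of $f$ is not obtained by any argument about $\mathrm{Homeo}(\Sigma(2,q,r))$; it follows directly from the Orson--Powell theorem, since $M_c(2,q,r)$ is simply connected and $f^*=\mathrm{id}$ on cohomology.
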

In the case that $(p, q, r)=(2, 3, 7)$ or $(2, 3, 11)$, the exoticness of the boundary Dehn twists were proved in \cite{konno2023exotic}. 
In the case that the boundary is Floer simple, then the exoticness was proved in \cite{konno2024four}. 
\begin{theorem}\label{milnor2}
    Let us assume that $(p, q, r)$ are pairwise coprime and $2 \le p < q < r$ and $7 \le r$. 
    Then there is an exotic diffeomorphism relative to the boundary on $M_c(p, q, r)$. 
\end{theorem}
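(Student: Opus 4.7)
The plan is to derive \cref{milnor2} from \cref{milnor_fiber} by combining it with a reduction that realises the boundary Dehn twist on $M_c(p,q,r)$ as an extension of the boundary Dehn twist on a suitably chosen $M_c(2,q',r')$. For each pairwise coprime triple $(p,q,r)$ with $2 \le p < q < r$ and $r \ge 7$, I aim to produce a triple $(2, q', r')$ to which \cref{milnor_fiber} applies, together with a codimension-zero embedding $\iota \colon M_c(2, q', r') \hookrightarrow M_c(p, q, r)$ whose image meets $\partial M_c(p,q,r)$ along a shared collar, and such that, up to isotopy rel boundary, the boundary Dehn twist $\tau_{p,q,r}$ agrees with $\tau_{2,q',r'}$ on $\iota(M_c(2, q', r'))$ and is the identity outside. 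Granting this, a smooth isotopy of $\tau_{p,q,r}$ to the identity rel boundary would restrict to such an isotopy for $\tau_{2, q', r'}$, contradicting \cref{milnor_fiber}.

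The principal geometric input is the embedding, for which I would use the plumbing description of Milnor fibers: $M_c(p, q, r)$ is diffeomorphic to a star-shaped plumbing whose three legs are determined by the negative continued-fraction expansions attached to $(p, q, r)$, and the plumbing for $M_c(2, q', r')$ has only one vertex on its first leg (of weight $-2$). The plan is to choose $(q', r')$ so that the plumbing graph for $(2, q', r')$ embeds as a connected subgraph in that for $(p, q, r)$, giving a plumbing submanifold $M_c(2, q', r') \subset M_c(p, q, r)$ whose boundary meets $\partial M_c(p, q, r)$ in a common torus neighbourhood. The Seifert fibration of $\partial M_c(p, q, r)$ should then induce, after an ambient isotopy, the Seifert fibration of $\partial M_c(2, q', r')$ used to define $\tau_{2, q', r'}$, so the two Dehn twists can be identified in a collar of the shared torus and both extended by the identity on the complement.

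The main obstacle is arithmetic compatibility: given $(p, q, r)$, we need $(q', r')$ such that $(q'-1)(r'-1)/4$ is odd \emph{and} the plumbing embedding with matching Seifert structures exists. For triples with $p = 2$ and $(q-1)(r-1)/4$ even, the natural candidate $(q', r') = (q, r)$ fails the parity condition, forcing a nontrivial modification of $(q', r')$ that must preserve the embedding. For triples with $p \ge 3$, all three parameters are odd and the plumbing graph has a more intricate structure, so one must exhibit a suitable subgraph realising a Brieskorn triple of the form $(2, q', r')$ with the correct parity. Verifying the existence of $(q', r')$ uniformly across both regimes, and checking that the Seifert fibrations can be made compatible under the embedding, is the main technical content of the argument. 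If a direct plumbing embedding cannot be produced in some cases, a fallback is to extend the family Bauer--Furuta computation from the proof of \cref{milnor_fiber} to the general $(p,q,r)$ setting, replacing the mod-$2$ parity obstruction by a direct non-vanishing statement for the relative Bauer--Furuta class of the mapping torus of $\tau_{p,q,r}$.
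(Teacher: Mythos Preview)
Your strategy has a genuine gap at the restriction step. Suppose you have realised $\tau_{p,q,r}$ as a diffeomorphism supported in an embedded copy of $M_c(2,q',r')$ and agreeing there with $\tau_{2,q',r'}$. An isotopy of $\tau_{p,q,r}$ to the identity is only assumed to be rel $\partial M_c(p,q,r)=\Sigma(p,q,r)$; nothing forces it to preserve the embedded $M_c(2,q',r')$, nor to fix its boundary $\Sigma(2,q',r')$. Hence the isotopy does not restrict to an isotopy of $\tau_{2,q',r'}$ rel $\Sigma(2,q',r')$, and you cannot derive a contradiction with \cref{milnor_fiber} this way. Relatedly, the geometric picture you describe cannot hold as stated: $\Sigma(2,q',r')$ and $\Sigma(p,q,r)$ are distinct closed connected $3$-manifolds (different Seifert invariants), so a codimension-zero embedding of $M_c(2,q',r')$ into $M_c(p,q,r)$ cannot have its boundary ``share a collar'' with $\partial M_c(p,q,r)$; the boundary of the embedded piece necessarily lies in the interior.

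The paper avoids this obstacle by not attempting any restriction argument and by not asserting that the boundary Dehn twist of $M_c(p,q,r)$ itself is exotic (the statement only claims existence of \emph{some} exotic diffeomorphism). It fixes the single triple $(2,3,7)$, uses the interior embedding $M_c(2,3,7)\hookrightarrow M_c(p,q,r)$ (available whenever $2\le p<q<r$ and $r\ge 7$), and extends both the boundary Dehn twist on $M_c(2,3,7)$ and its square root $\delta$ by the identity to diffeomorphisms of $M_c(p,q,r)$. It then applies \cref{main} directly to $M_c(p,q,r)$ (which is itself symplectic with trivial canonical bundle), rather than deducing anything from \cref{milnor_fiber}. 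The only nontrivial point is to check that the $(-1)$-eigenspace of $\delta^*$ on $H^+(M_c(p,q,r),\R)$ still has dimension $\equiv 2\pmod 4$; this is done by a Mayer--Vietoris argument showing that the eigenspace is exactly $H^+(M_c(2,3,7),\R)\cong\R^2$. No arithmetic search over $(q',r')$ and no compatibility of Seifert fibrations is needed.
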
 

The above applications are proved using the following theorem. 

\begin{theorem}\label{main}
Let $X$ be a compact connected smooth spin $4$-manifold with boundary $Y$. We assume that $H_1(X, \Q)=0$ and $Y$ is a rational homology sphere. Let $f$ be a self-diffeomorphism of $X$ such that $f|_Y=id_Y$. We assume the following conditions:
\begin{itemize}
    \item $f^*=id$ on $H^*(X, \Z)$. 
    \item There is a self-diffeomorphism $\delta$ such that $\delta^2=f$ and $\delta|_Y=id_Y$.
    \item The dimension of the $-1$-eigenspace of $\delta^*$ in $H^+(X, \R)$ is even and not divisible by $4$.  
\end{itemize}
If the one of the following conditions is satisfied, then $f$ is not smoothly isotopic to the identity relative to the boundary. 
\begin{enumerate}
    \item \label{closure} There is a compact oriented smooth $4$-manifold $X^+$ such that $\partial X^+=-Y$ and $b^+(X \cup_Y X^+)>1$. We also assume that there is a spin$^c$ structure $\mathfrak{s}$ on $X \cup_Y X^+$ such that $\mathfrak{s}|_X$ is a spin structure and the Seiberg--Witten invariant $SW(\mathfrak{s})$ is odd. 
    \item \label{symplectic} $X$ is symplectic and the canonical spin$^c$ structure induced by the symplectic structure coincides with the spin$^c$ structure induced by a spin structure.  
\end{enumerate}
\end{theorem}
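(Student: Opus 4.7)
The strategy is to use the family relative Bauer--Furuta invariant of the mapping torus $M_f \to S^1$ as an obstruction, and to exploit the square root $\delta$ to refine this invariant to a $\mathrm{Pin}(2) \times \Z/2$-equivariant class in which the eigenspace hypothesis forces non-vanishing.

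First I would construct the family relative $\mathrm{Pin}(2)$-equivariant Bauer--Furuta invariant. Since $X$ is spin with rational homology sphere boundary $Y$, the relative Bauer--Furuta invariant of $X$ lands in Manolescu's $\mathrm{Pin}(2)$-equivariant Seiberg--Witten--Floer stable homotopy type of $Y$, and this extends to smooth families in the spirit of Baraglia--Konno. The mapping torus $M_f \to S^1$ yields a $\mathrm{Pin}(2) \times S^1$-equivariant stable map whose class depends only on $[f] \in \pi_0(\mathrm{Diff}(X, \partial X))$, and if $f$ is isotopic to the identity rel boundary this class agrees with the one of the trivial family $X \times S^1$ and is $S^1$-equivariantly trivial. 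Using $\delta^2 = f$ together with $f^* = \mathrm{id}$ on integral cohomology, I would upgrade $\delta$ to a spin diffeomorphism (possibly after modifying by a class in $H^1(X; \Z/2)$) and present $M_f \to S^1$ as a $\Z/2$-cover of $M_\delta \to S^1$ with deck action $[x, t] \mapsto [\delta^{-1}(x), t + 1/2]$. Since this action commutes with the $\mathrm{Pin}(2)$-symmetry, it endows the family invariant with a natural $\mathrm{Pin}(2) \times \Z/2$-equivariance refining the $S^1$-equivariance.

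Next I would establish non-triviality in two stages. For the non-family part: in case (1), gluing $X^+ \times S^1$ fiberwise closes up the family into $\mathcal Z \to S^1$ with closed fibers $Z = X \cup_Y X^+$, and the hypotheses $b^+(Z) > 1$ and $\mathrm{SW}(\mathfrak s) \equiv 1 \pmod 2$ give non-vanishing of the ordinary Bauer--Furuta mod $2$; in case (2), the symplectic structure together with the coincidence of canonical and spin-induced spin$^c$ structures gives non-vanishing of the relative Bauer--Furuta invariant directly via the canonical class. For the family part, the difference between the $\delta$-refined invariant and the trivial-family invariant is controlled by a mod-$2$ equivariant characteristic class associated to the $\delta^*$-representation on $H^+(X, \R)$. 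Decomposing $H^+(X, \R) = H^+_{+1} \oplus H^+_{-1}$ with $\dim H^+_{-1} = 2k$, the $+1$-summand contributes trivially and the $(-1)$-summand contributes an equivariant class whose mod-$2$ reduction should vanish precisely when $k$ is even, i.e., when $\dim H^+_{-1}$ is divisible by $4$. Under our hypothesis $k$ is odd, so the obstruction is non-zero, contradicting $f \sim \mathrm{id}$ rel boundary.

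The principal obstacle is the precise set-up and computation of the family relative Bauer--Furuta invariant in the $\mathrm{Pin}(2)$-equivariant boundary setting: verifying compatibility of the $\delta$-symmetry with the spin and $\mathrm{Pin}(2)$-structures through the relative Floer spectrum of $Y$, and identifying the mod-$2$ obstruction with the appropriate equivariant characteristic class of the $(-1)$-eigenspace. Once this infrastructure is in place, the parity hypothesis enters at exactly one point, namely as the condition that this class does not arise as a square and hence is non-zero modulo $2$.
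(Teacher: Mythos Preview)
Your high-level strategy matches the paper's: compare the family relative Bauer--Furuta invariants of $\mathbb{X}_f$ and the trivial family, exploit $\delta^2=f$ via the double cover $\mathbb{X}_f\to\mathbb{X}_\delta$, and let the parity of $\dim H^+_{-1}$ enter through the $\delta^*$-representation on $H^+$. The technical mechanism, however, is quite different from your proposed $\mathrm{Pin}(2)\times\Z/2$-equivariant refinement. The paper \emph{forgets} the $\mathrm{Pin}(2)$-action after using the quaternionic structure to trivialize the index bundle, and then pairs the family invariant against a fixed non-family object---the unparametrized Bauer--Furuta invariant of $X^+$ in case~(1), or Iida--Taniguchi's Floer-homotopy contact invariant of $(Y,\xi)$ in case~(2)---via Spanier--Whitehead duality on $SWF(Y)$. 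This produces a stable map of sphere bundles over $S^1$, and the invariant is read off in $KO^0(S^1)\cong\Z[l]/(l^2-1,2(l-1))\cong\Z/2\oplus\Z$ (resp.\ $KO^{-1}(S^1)\cong\Z/2\oplus\Z/2$). Crucially this depends on a choice of stable \emph{framing} of $H^+(\mathbb{X}_f)\to S^1$. Two natural framings are compared: the ``constant'' one from $f^*=\mathrm{id}$, and the pullback along $\pi_2\colon S^1\to S^1$ of any framing of $H^+(\mathbb{X}_\delta)\cong\underline{\R}^{\,b^+-\dim H^+_{-1}}\oplus l^{\dim H^+_{-1}}$. They differ by the nontrivial element of $\pi_1(SO)\cong\Z/2$ exactly when $\dim H^+_{-1}\equiv 2\pmod 4$. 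A direct $KO^0(S^1)$ computation shows that changing the framing multiplies by $l$ and hence shifts the $\Z/2$-component by the mod-$2$ reduction of the $\Z$-component, which is the unparametrized invariant (odd $SW$ in case~(1), $\pm1$ by Iida's theorem in case~(2)). Since anything pulled back through $\pi_2^*$ has vanishing $\Z/2$-component, the constant-framing invariant of $\mathbb{X}_f$ is forced to be nonzero.

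Your ``mod-$2$ equivariant characteristic class associated to the $\delta^*$-representation on $H^+$'' is morally this framing obstruction, so you have located the right object. But two points are underspecified. First, you treat the non-family non-vanishing and the family obstruction as separate ingredients, whereas in the paper they are \emph{coupled multiplicatively} through the ring structure of $KO^0(S^1)$: the framing shift sends $(a,b)\mapsto(a+b,b)$, so the $\Z/2$-obstruction appears only because the unparametrized degree $b$ is odd. Second, in case~(2) you assert non-vanishing ``directly via the canonical class,'' but the relative invariant lands in $SWF(Y)$, and one needs a specific receptacle to extract a number; the paper uses the Iida--Taniguchi contact invariant and the gluing theorem identifying the pairing with Iida's Bauer--Furuta refinement of Kronheimer--Mrowka. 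Your $\mathrm{Pin}(2)\times\Z/2$ route may be workable, but the paper's $KO$-and-framing argument sidesteps exactly the equivariant compatibility issues you flag as the principal obstacle.
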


If $X$ is simply connected, then the topological triviality of the map $f$ follows from the theorem proved by Orson--Powell \cite[Theorem A]{orson2022mapping}. 

To prove \cref{main}, we compare family relative Bauer--Furuta invariants of mapping torus of $f$ and the trivial family by taking a pairing with a family relative Bauer--Furuta invariant of a trivial family of $4$-manifolds with boundary $-Y$ (case \eqref{closure} in \cref{main}) or a Floer homotopy contact invariant of $-Y$ (case \eqref{symplectic} in \cref{main}). 
In \cite{konno2024four} and \cite{iida2024diffeomorphisms}, they consider the pairing of relative family Seiberg--Witten invariant with the monopole Floer--homology valued invariant of contact structures. 
In their case, the formal dimension of the Seiberg--Witten moduli space for each fiber is $-1$ while we consider the case in which the formal dimension for each fiber is $0$. 

We consider a numerical invariant defined by using the family Bauer--Furuta invariant. To compute that invariant, we use a square root of the diffeomorphism with specific properties that appear in \cite{konno2023exotic}; however the method of using the square root differs from the original one. 

\begin{acknowledgement}
    The author is grateful to Tasuki Kinjo, Hokuto Konno, and Masaki Taniguchi for helpful discussions. 
    The author is supported by JSPS KAKENHI Grant Number 24027367. 
\end{acknowledgement}

\section{Review of the family Bauer--Furuta invariant}
    In this section, we recall the definition of the family Bauer--Furuta invariant that is given in \cite[Subsection 2.3]{konno2022groups}. Let us consider the following setup and introduce some notations. 
    \begin{itemize}
        \item Let $X$ be an oriented connected compact $4$-manifold with boundary. We assume that the boundary of $X$ is a rational homology sphere. 
        \item Let $\pi \colon \mathbb{X} \to B$ be an oriented smooth  fiber bundle with fiber $X$ on a compact topological space $B$. 
        \item Assume that there is a trivialization  on $\partial \mathbb{X}$, which is the family of the boundary of $X$ on $B$. We fix a trivialization $\phi \colon \partial \mathbb{X} \to B \times \partial X$. 
        \item Assume that there is a a fiber-wise spin structure $\boldsymbol{\mathfrak{s}}$ on $\mathbb{X}$. We also assume there is a trivialization $\boldsymbol{\mathfrak{s}}|_{\partial \mathbb{X}} \to B \times \mathfrak{t}$ for some spin structure $\mathfrak{t}$ on $\partial X$. We fix this trivialization. 
        \item We write $H^+(\mathbb{X})$ the vector bundle on $B$ with its fiber on $b \in B$ is $H^+(\mathbb{X}_b, \R) \cong H^+(X, \R)$.
        \item Let $g$ be a Riemannian metric on $Y$ and $\tilde{g}$ be a fiberwise Riemannian metric on $\mathbb{X}$ such that $\tilde{g}$ is cylindrical near the boundary $\partial \mathbb{X}$.    
    \end{itemize}
    Then we can define the family Bauer--Furuta invariant 
    \[
    \Psi(\mathbb{X}, \boldsymbol{\mathfrak{s}}) \in \{  Th(\ind_{\C}(\Slash{D}_g)), Th(H^+(\mathbb{X}))\wedge \Sigma^{(n(Y, g)/2)\q}SWF(Y)\}_{S^1}
    \] 
    where $\ind_{\C}(\Slash{D}_{\tilde g}) \in K(B)$ is the family index of the spin structure $\mathfrak{s}$ and $n(Y, g)$ is the correction term defined by Manolescu \cite{manolescu2003seiberg}. 
$Th(V)$ is the Thom space of a vector bundle $V$. 

One can prove that the $S^1$-equivariant stable homotopy class of $\Psi(\mathbb{X}, \boldsymbol{\mathfrak{s}})$ is independent of the choice of a family of metrics and reference connections by a standard argument.  
    
    \subsection{Family on $S^1$}
        Let us consider the family Bauer--Furuta invariants in the case that $B=S^1$. 
    In this case, since $\mathfrak{s}$ is a spin structure, we have a canonical trivialization of $\ind_{\C}(\Slash{D})$ using the quaternionic structure on $\ind_{\C}(\Slash{D})$, see the paper of Kronheimer--Mrowka \cite{kronheimer2020dehn}. If a trivialization $o\colon H^+(\mathbb{X}) \to \R^{b^+(X)} \times S^1$ is given, by taking a representative of $\Psi(\mathbb{X}, \boldsymbol{\mathfrak{s}})$ we have the following maps:
    \[
        S^{(n\R \oplus ({-\frac{\sigma(X)}{16}+m})\q)}(S^1) \to  S^{({n+b^+(X)}) \R \oplus m \q}(S^1)\wedge SWF(Y) 
    \]
for $n, m \gg 1$ where $S^{V}(A)$ denote the unreduced suspension of $V$ on $A$. If we fix an isomorphism $\q \to \R^4$ and forget $\mathrm{Pin}(2)$-action, we have a stable homotopy class
\begin{equation}\label{family_invariant}
    \Phi'(\mathbb{X}, {\mathfrak{s}}, o) \in \{ S^{-\frac{\sigma(X)}{4} -b^+(X)}(S^1), S^1_+ \wedge SWF(Y) \}  
\end{equation}
where $S^1_+$ is the disjoint union of the $S^1$ and a point. 
It is difficult to compute the invariant $\Phi'(\mathbb{X}, \mathfrak{s}, o)$ in \eqref{family_invariant} when $SWF(Y)$ is not the sphere spectrum. In the next section, we define two types of  $\Z/2\Z$-valued invariants from $\Phi'(\mathbb{X}, \mathfrak{s}, o)$. Each invariant corresponding to the conditions \eqref{closure} or \eqref{symplectic} in \cref{main}.  

\section{Definitions of our invariants and calculations}
In this section we give two types of $\Z/2\Z$-valued invariants corresponding to the conditions \eqref{closure} or \eqref{symplectic} in \cref{main}. 
\subsection{Definition of the $\Z/2\Z$-invariant: case \eqref{closure}}\label{sec_case1} 
    Let $X$ be a $4$-manifold and $\varphi$ be a diffeomorphism of $X$ that satisfies the following conditions:
    \begin{itemize}
        \item $X$ is a compact connected spin $4$-manifold and $H_1(X, \Q)=0$.  Moreover its boundary $Y$ is a rational homology sphere. 
        \item The diffeomorphism $\varphi$ is the identity on the boundary. 
        \item The induced map $\varphi^* \colon \det H^+(X) \to \det H^+(X)$ is the identity. 
        \item There is a compact $4$-manifold $X^+$ with $\partial X^+ =-Y$, $b^1(X^+)=0$, and $b^+(X \cup_Y X^+) >1$. 
        \item Let $\mathfrak{s}$ be the unique spin structure on $X$. Then there is a spin$^c$ structure $\mathfrak{s}'$ on $X \cup_Y X^+$ such that $\mathfrak{s}'|_X \cong \mathfrak{s}$ as a spin$^c$ structure. 
        \item The formal dimension of the Seiberg--Witten moduli space of the spin$^c$ structure $\mathfrak{s}'$ on $X \cup_Y X^+$ is $0$. 
\end{itemize}
We write $X'$ for the glued $4$-manifold $X \cup_Y X^+$. 
Let $\mathbb{X}$ be the mapping torus of $\varphi$ and $\pi \colon \mathbb{X} \to S^1$ be the projection. 
Since the map $\varphi^* \colon \det H^+(X) \to \det H^+(X)$ is the identity, we have a framing of the bundle $H^+(\mathbb{X})$. Let us fix a framing of $H^+(\mathbb{X})$, say, $f$. 
Then we have an invariant 
    \[
    \Phi'(\mathbb{X}, \mathfrak{s}, f) \in \{ S^{-\frac{\sigma(X)}{4} -b^+(X)}(S^1), S^1_+ \wedge SWF(Y) \}
    \]
    as in the previous section. 

Let $\Phi(X^+, \mathfrak{s}'|_{X^+})$ be the unparametrized relative Bauer--Furuta invariant 
    \[
        \Phi(X^+, \mathfrak{s}'|_{X^+}) \in \{ S^{-\frac{\sigma(X^+)}{4}-b^+(X^+)}, SWF(-Y) \}. 
    \]
\begin{definition}\label{case1}
    Let $\epsilon$ be the Spaniel--Whitehead duality map:
    \[
    \epsilon \colon SWF(Y) \wedge SWF(-Y) \to \mathbb{S}^0
    \]
    where $\mathbb{S}^0$ is the sphere spectrum. 
    Then we define the following stable homotopy element: 
    \begin{align}
        \Psi(\mathbb{X}, \mathfrak{s}, f) := id_{S^1_+}\wedge \epsilon(\Phi'(\mathbb{X}, \mathfrak{s}, f) \wedge \Phi(X^+, \mathfrak{s}'|_{X^+})) \in &\{ S^{-\frac{\sigma(X')}{4}-b^+(X')}(S^1), S^1_+ \}\\
&= \{ \Sigma(S^1_+), S^1_+ \}. 
    \end{align}
    
\end{definition}

Let us define a $\Z/2$-valued invariant of the isomorphism class of $(\mathbb{X}', \mathfrak{s}', f')$. 
Let $n \gg 1$ be a sufficiently large integer and let
    \[
    \Psi(\mathbb{X}, \mathfrak{s}, f)_n \colon 
    S^{n+1}(S^1) \to S^n(S^1)
    \]     
    be a representative of $\Psi(\mathbb{X}, \mathfrak{s}, f)$. 
    Let us denote by $\tau $ the generator of $\tilde{KO}^{n}(S^n(S^1))$. 
Then we have 
\begin{align*}
    \Psi(\mathbb{X}, \mathfrak{s}, f)_n^*\tau \in & \tilde{KO}^{n}(S^{n+1}(S^1))   \\ 
    & \cong KO^{-1}(S^1) \\
    & \cong \tilde{KO}^{-1}(S^1) \oplus KO^{-1}(pt) \\
    & \cong KO^{-2}(pt) \oplus KO^{-1}(pt) \\
    & \cong \Z/2\Z \oplus \Z/2\Z
\end{align*}
here $pt$ is a point in $S^1$.  
\begin{definition}
    Let us write 
\[\Psi(\mathbb{X}, \mathfrak{s}, f)_n^*\tau=(\psi_{-2}(\mathbb{X}', \mathfrak{s}', f), \psi_{-1}(\mathbb{X}', \mathfrak{s}', f)) \in KO^{-2}(pt) \oplus KO^{-1}(pt). 
\] 
One can see that $\psi_{-2}$ and $\psi_{-1}$ are independent of $n$ if $n \gg1 $. We define the family invariant $\Xi(\mathbb{X}', \mathfrak{s}', f) \in \Z/2\Z$ by $\psi_{-2}(\mathbb{X}', \mathfrak{s}', f)$. 
\end{definition}

From the gluing formula (\cite{manolescu2007gluing}, \cite{khandhawit2023unfolded}) of Bauer--Furuta invariants, we have the following identification of $\psi_{-1}(\mathbb{X}', \mathfrak{s}', f)$.
\begin{proposition}\label{psi1}
    Let $X'$ be the glued $4$-manifold $X \cup_Y X^+$ and let $\mathfrak{s}'$ be the spin$^c$ structure which we mentioned in the beginning of this subsection. If $b^+(X') \equiv 3 \mod 4$, $\psi_{-1}(\mathbb{X}', \mathfrak{s}', f) \in \Z/2\Z$ is the mod $2$ reduction of the Seiberg--Witten invariant of $(X', \mathfrak{s}')$:
    \[
    \psi_{-1}(\mathbb{X}', \mathfrak{s}', f) \equiv SW(X', \mathfrak{s}') \mod 2. 
    \]
\end{proposition}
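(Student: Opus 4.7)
The plan is to reduce \cref{psi1} to the unparametrized gluing formula for Bauer--Furuta invariants, and then invoke the classical identification of the KO-valued Bauer--Furuta invariant with the mod 2 Seiberg--Witten invariant when $b^+ \equiv 3 \pmod{4}$.

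First I would identify the $\psi_{-1}$-component of $\Psi(\mathbb{X}, \mathfrak{s}, f)_n^{*}\tau$ with a basepoint restriction. The splitting
\[
KO^{-1}(S^1) \cong KO^{-2}(pt) \oplus KO^{-1}(pt)
\]
used in the definition of $\psi_{-1}$ is induced by a choice of basepoint $p \in S^1$: the projection onto the second factor is pullback along $\{p\} \hookrightarrow S^1$, and its kernel is $\tilde{KO}^{-1}(S^1) \cong KO^{-2}(pt)$. Translated to the stable-homotopy side, $\psi_{-1}(\mathbb{X}', \mathfrak{s}', f)$ is computed by restricting the representative $\Psi(\mathbb{X}, \mathfrak{s}, f)_n$ to the slices of source and target over $\{p\}$, producing a stable map $S^{n+1} \to S^n$ whose class in $\tilde{KO}^n(S^{n+1}) = KO^{-1}(pt) \cong \Z/2\Z$ is exactly $\psi_{-1}$.

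Next I would identify this basepoint slice with the unparametrized Bauer--Furuta invariant of the closed manifold $X'$. Over $p$ the mapping torus $\mathbb{X}$ is canonically identified with $X$, the framing $f$ trivializes the fiber of $H^+(\mathbb{X})$, and the family invariant $\Phi'(\mathbb{X}, \mathfrak{s}, f)$ restricts to the unparametrized relative Bauer--Furuta invariant $\Phi(X, \mathfrak{s}) \in \{S^{-\sigma(X)/4-b^+(X)}, SWF(Y)\}$. Smashing with $\Phi(X^+, \mathfrak{s}'|_{X^+})$ and composing with the Spanier--Whitehead pairing $\epsilon$ then yields, by the gluing formula of Manolescu and Khandhawit, the unparametrized Bauer--Furuta invariant $\Phi(X', \mathfrak{s}') \in \{S^{-\sigma(X')/4-b^+(X')}, \mathbb{S}^0\}$ of the closed spin$^c$ manifold $(X', \mathfrak{s}')$.

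Finally, by the classical theorem of Bauer--Furuta, when $b^+(X') \equiv 3 \pmod{4}$ and the formal Seiberg--Witten moduli dimension is zero, the image of $\Phi(X', \mathfrak{s}')$ under the Hopf generator $\tau \in KO^{-1}(pt) \cong \Z/2\Z$ coincides with the mod 2 reduction of $SW(X', \mathfrak{s}')$; in this dimension range the pertinent stable stem is detected by $\eta$, and $\tau$ records precisely the parity of the SW count. The main obstacle will be the bookkeeping in the middle step: one must carefully verify that the KO-theoretic splitting used to extract $\psi_{-1}$ is compatible, on the stable level, with restriction of the $S^1$-parametrization to the basepoint, and that the family-parametrized gluing specializes to the unparametrized Manolescu--Khandhawit gluing at the basepoint. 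Once this compatibility is established, the identification with the mod 2 SW invariant is standard.
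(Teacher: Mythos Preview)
Your proposal is correct and follows essentially the same route as the paper: restrict the family invariant to the basepoint fiber (organized in the paper via a commutative square), identify that restriction with the unparametrized Bauer--Furuta map $\Phi(X',\mathfrak{s}')$ via the gluing formula, and then invoke \cite[Proposition~4.4]{bauer2004stable} to equate the $KO^{-1}$ pullback with $SW(X',\mathfrak{s}') \bmod 2$ under $b^+\equiv 3 \pmod 4$ and formal dimension zero. Your explicit explanation of why the $KO^{-1}(pt)$-component is the basepoint restriction is more detailed than the paper's, but the argument is the same.
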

\begin{proof}
    From the definition of $\Psi(\mathbb{X}, \mathfrak{s}, f)$, we have the following commutative diagram:
    \[\begin{CD}
    S^{n+1}(S^1) @>>{\Psi(\mathbb{X}, \mathfrak{s}, f) }> S^n(S^1) \\
    @AA{}A  @AA{}A \\
    S^{n+1}(pt) @>>{\Phi(X', \mathfrak{s}')}> S^n(pt)
    \end{CD}
    \]
    where $\Phi(X', \mathfrak{s}')$ is the Bauer--Furuta map of the pair $(X', \mathfrak{s}')$. From the definition of $\psi_{-1}(\mathbb{X}', \mathfrak{s}', f)$, we obtain the following formula by applying $\tilde{KO}^{n}$ to the above diagram:
    \begin{align*}
        \Phi(X', \mathfrak{s}')^* \tau_{n}= \psi_{-1}(\mathbb{X}', \mathfrak{s}', f)
    \end{align*}
    where $\tau_n$ is the Thom class of $\tilde{KO}^{n}(S^n(pt))$. From \cite[Proposition 4.4.]{bauer2004stable} and the assumptions that $b^+ \equiv 3 \mod 4$ and the formal dimension of the Seiberg--Witten moduli space of $(X', \mathfrak{s}')$ is zero \footnote{Actually this condition implies that $X'$ is an almost complex since we can take a non-vanishing section of $\Lambda^+$}, we have 
    \[
        \Phi(X', \mathfrak{s}')^* \tau_{n}= SW(X', \mathfrak{s}') \mod 2.
    \]
This finishes the proof. 
\end{proof}

\subsection{Definition of the $\Z/2\Z$-invariant: case \eqref{symplectic}}\label{sec_case2}

Let $X$ be a symplectic $4$-manifold with symplectic form $\omega$ and $\phi$ be a diffeomorphism of $X$ that satisfies the following conditions:
\begin{itemize}
    \item $X$ is compact connected spin $4$-manifold and $H_1(X, \Q)=0$. Moreover its boundary $Y$ is a rational homology sphere. 
    \item The canonical spin$^c$ structure $\mathfrak{s}_{\omega}$ on $X$ that is induced by the symplectic structure $\omega$ is isomorphic to the spin$^c$ structure $\mathfrak{s}$ induced by a spin structure. 
    \item The diffeomorphism $\varphi$ is the identity on the boundary. 
        
    \item The induced map $\varphi^* \colon \det H^+(X) \to \det H^+(X)$ is the identity. 
\end{itemize}
Let $\mathbb{X}$ be the mapping torus of $\varphi$ and $\pi \colon \mathbb{X} \to S^1$ be the projection. 
We fix a framing $f$ of the bundle $H^+(\mathbb{X})$ as in the case of the previous subsection. 
Then we have a family invariant 
    \[
    \Phi'(\mathbb{X}, \mathfrak{s}, f) \in \{ S^{-\frac{\sigma(X)}{4} -b^+(X)}(S^1), S^1_+ \wedge SWF(Y) \}. 
    \]

Since $X$ is a symplectic $4$-manifold, the boundary $Y$ of $X$ has a natural contact structure induced by $\omega$. Let us denote by $\xi$ the contact structure. 
In \cite{iida2021seiberg}, Iida and Taniguchi introduced the stable homotopy group of the Seiberg--Witten Floer homotopy type valued contact invariant:
\begin{align}
    \Phi(Y, \xi) \in \{ S^0, \Sigma^{\frac{1}{2}-d_3(-Y, [\xi])}SWF(-Y, \mathfrak{s}_{\xi})  \}
\end{align}
where $\mathfrak{s}_{\xi}$ is a spin$^c$ structure induced by the contact structure $\xi$ and $d_3(-Y, [\xi])$ be the homotopy invariant of $2$-plane field introduced Gompf \cite{gompf1998handlebody}. From the assumption, $\mathfrak{s}_{\xi}$ coincides with the spin structure. 

From the assumption on the diffeomorphism $\phi$, the boundary of the family $\partial \mathbb{X}$ is the trivial family: $\partial \mathbb{X} \cong S^1 \times Y$. Then one can define the following invariant: 

\begin{definition}\label{case2}
    Let $\epsilon$ be the Spaniel--Whitehead duality map:
    \[
    \epsilon \colon SWF(Y) \wedge SWF(-Y) \to \mathbb{S}^0 
    \]
    where $\mathbb{S}^0$ is the sphere spectrum. 
    Then we define the following stable homotopy element: 
    \begin{align*}
        \Psi_{\xi}(\mathbb{X}, \mathfrak{s}, f) := id_{S^1_+}\wedge \epsilon(\Phi'(\mathbb{X}, \mathfrak{s}, f) \wedge \Phi(Y, \xi)) \in &\{ S^{\langle e(S^+), [X, \partial X] \rangle}(S^1), S^1_+ \}\\
&= \{ S^1_+, S^1_+ \}. 
    \end{align*}
    where $S^+$ is the spinor bundle of $\mathfrak{s}_{\omega}$ on $X$. 
\end{definition}

Let us define a $\Z/2$-valued invariant of the isomorphism class of a family $(\mathbb{X}, \mathfrak{s}, f)$ using contact structure $\xi$ on $Y$. 
Let $n \gg 1$ be a sufficiently large integer and let
    \[
    \Psi_{\xi}(\mathbb{X}, \mathfrak{s}, f)_n \colon 
    S^{n}(S^1) \to S^n(S^1)
    \]     
    be a representative of $\Psi_{\xi}(\mathbb{X}, \mathfrak{s}, f)$. 
    Let us denote by $\tau $ the generator of $\tilde{KO}^{n}(S^n(S^1))$. 
Then we have 
\begin{align*}
    \Psi_{\xi}(\mathbb{X}, \mathfrak{s}, f)_n^*\tau \in & \tilde{KO}^{n}(S^{n}(S^1))   \\ 
    & \cong KO^{0}(S^1) \\
    & \cong \tilde{KO}^{0}(S^1) \oplus KO^{0}(pt) \\
    & \cong KO^{-1}(pt) \oplus KO^{0}(pt) \\
    & \cong \Z/2\Z \oplus \Z
\end{align*}
here $pt$ is a point in $S^1$.  
\begin{definition}
    Let us write 
\[\Psi_{\xi}(\mathbb{X}, \mathfrak{s}, f)_n^*\tau=(\psi_{\xi, -1}(\mathbb{X}, \mathfrak{s}, f), \psi_{\xi, 0}(\mathbb{X}, \mathfrak{s}, f)) \in KO^{-1}(pt) \oplus KO^{0}(pt). 
\] 
One can see that $\psi_{\xi, -1}$ and $\psi_{0}$ are independent of $n$ if $n \gg1 $. We define the family invariant $\Xi_{\xi}(\mathbb{X}, \mathfrak{s}, f) \in \Z/2\Z$ by $\psi_{\xi, -1}(\mathbb{X}, \mathfrak{s}, f)$. 
\end{definition}

Let $\Phi(X, \xi, \mathfrak{s})$ be a Bauer--Furuta refinement of the Kronheimer--Mrowka's invariant introduced by Iida in \cite{iida2021bauer}. Then we can calculate $\psi_{\xi, 0}(\mathbb{X}, \mathfrak{s}, f)$. 

\begin{proposition}\label{psi1}
    Under the assumptions on $X$, $\xi$, and $\mathfrak{s}$ which we described in the beginning of this subsection,  $\psi_{\xi, 0}(\mathbb{X}, \mathfrak{s}, f)$ is a generator of $KO^0(pt) \cong \Z$. 
\end{proposition}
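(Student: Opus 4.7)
The plan is to reduce the calculation of $\psi_{\xi,0}$ to the unparametrized setting by restricting the mapping torus family over $S^1$ to the fibre above a point $p \in S^1$. The splitting $KO^0(S^1) \cong \tilde{KO}^{-1}(pt) \oplus KO^0(pt)$ used in the definition is natural with respect to the inclusion $\iota \colon \{p\} \hookrightarrow S^1$, and the projection onto the $KO^0(pt)$ summand is precisely $\iota^*$. Consequently $\psi_{\xi,0}(\mathbb{X}, \mathfrak{s}, f)$ coincides with the pullback of the Thom class $\tau$ under the composition $\Psi_\xi(\mathbb{X}, \mathfrak{s}, f)_n \circ \iota$.

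First I would identify this restriction with Iida's Bauer--Furuta refinement of the Kronheimer--Mrowka invariant. The fibre of $\pi \colon \mathbb{X} \to S^1$ over $p$ is canonically $X$, the framing $f$ restricts to a framing of $H^+(X)$, and a standard check shows that the restriction of the finite-dimensional approximation of $\Phi'(\mathbb{X}, \mathfrak{s}, f)$ to $\{p\}$ is the unparametrized relative Bauer--Furuta invariant $\Phi(X, \mathfrak{s})$. Since the Spanier--Whitehead pairing and smash product with $\Phi(Y, \xi)$ both commute with this restriction, I obtain
\[
\iota^* \Psi_\xi(\mathbb{X}, \mathfrak{s}, f) = \epsilon(\Phi(X, \mathfrak{s}) \wedge \Phi(Y, \xi)) = \Phi(X, \xi, \mathfrak{s}),
\]
which, after unreducing the suspension variables, is represented by a map $S^n \to S^n$; its KO-pullback of $\tau$ is then its ordinary integer degree sitting inside $KO^0(pt) \cong \Z$.

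The final step is to show that this degree equals $\pm 1$. Because $X$ is symplectic with $\mathfrak{s}_\omega \cong \mathfrak{s}$ and $\xi$ the contact structure on $\partial X = Y$ induced by $\omega$, Taubes' theorem produces, after the standard $\omega$-perturbation of the Seiberg--Witten equations, a unique transversely cut-down solution, so that the Kronheimer--Mrowka invariant of $(X, \xi, \mathfrak{s})$ equals $\pm 1$. By Iida's refinement theorem in \cite{iida2021bauer}, the Bauer--Furuta invariant $\Phi(X, \xi, \mathfrak{s})$ realises this count as the degree of the underlying stable map of spheres. Combining this with the identification of the previous paragraph yields that $\iota^*\Psi_\xi(\mathbb{X}, \mathfrak{s}, f)_n^* \tau$ is a generator of $KO^0(pt)$, as required.

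The main obstacle will be the compatibility check in the previous identification: one must verify on the nose that the fibre-over-a-point restriction commutes with the Spanier--Whitehead duality pairing at the level of finite-dimensional approximations, with matching index-bundle framings, suspension coordinates, and correction-term shifts $n(Y,g)$, in a way compatible with the conventions used in \cite{iida2021bauer}. Once this bookkeeping is settled, the numerical conclusion is immediate from Taubes' $\pm 1$ theorem.
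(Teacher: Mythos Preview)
Your proposal is correct and follows essentially the same route as the paper: restrict the family invariant to the fibre over a point, identify the result with $\epsilon(\Phi(X,\mathfrak{s})\wedge\Phi(Y,\xi))$, and then invoke Iida's result that for a symplectic filling this equals a generator. The only cosmetic differences are that the paper packages the identification $\epsilon(\Phi(X,\mathfrak{s})\wedge\Phi(Y,\xi))=\Phi(X,\xi,\mathfrak{s})$ by citing the gluing formula \cite[Theorem~1.2]{iida2021seiberg} and then obtains the $\pm 1$ conclusion directly from \cite[Theorem~1.2]{iida2021bauer}, rather than spelling out the underlying Taubes argument as you do; the bookkeeping concerns you raise are exactly what these cited theorems absorb.
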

\begin{proof}
    From the definition of $\Psi(\mathbb{X}, \mathfrak{s}, f)$, we have the following commutative diagram:
    \[\begin{CD}
    S^{n}(S^1) @>>{\Psi(\mathbb{X}, \mathfrak{s}, f) }> S^n(S^1) \\
    @AA{}A  @AA{}A \\
    S^{n}(pt) @>>{\epsilon(\Phi(X, \mathfrak{s})\wedge \Phi(Y, \xi))}> S^n(pt)
    \end{CD}. 
    \]
    From the gluing formula \cite[Theorem 1.2]{iida2021seiberg} of the stable-cohomotopy refinement of the Kronheimer--Mrowka's invariant, we have 

    \[
        \Phi(X, \xi, \mathfrak{s})= \epsilon(\Phi(X, \mathfrak{s})\wedge \Phi(Y, \xi)) \in \pi_0^{st}(S^0). 
    \]
    
    From the definition of $\psi_{\xi, 0}(\mathbb{X}, \mathfrak{s}, f)$, we obtain the following formula by applying $KO^{n}$ to the above diagram:
    \begin{align*}
        \Phi(X,\xi ,\mathfrak{s})^* \tau_{n}= \psi_{\xi, 0}(\mathbb{X}, \mathfrak{s}, f), 
    \end{align*}
    where $\tau_n$ is the Thom class of $\tilde{KO}^{n}(S^n(pt))$. 
    Since the contact structure $\xi$ is induced by the symplectic structure $\omega$ on $X$, we can use \cite[Theorem 1.2.]{iida2021bauer} and we have that 
 $\Phi(X, \xi,\mathfrak{s})^* \tau_{n}$ is a generator. 
This finishes the proof. 
\end{proof}

\subsection{The choice of a stable framing of $H^+(\mathbb{X})$ and the family invariant}

In this subsection, we consider the effect of the choice of a stable framing $f$ of the real vector bundle $H^+(\mathbb{X}) \to S^1$ to the invariants $\psi_{-2}(\mathbb{X}', \mathfrak{s}', f)$ and $\psi_{\xi, -1}(\mathbb{X}, \mathfrak{s}, f)$. We assume the family $\mathbb{X}$ and the family of spin$^c$ structures $\mathfrak{s}$ satisfy the assumptions on \cref{sec_case1} or \cref{sec_case2}. 

There are two homotopy classes of stable framings on $S^1$ since $\pi^{st}_1(S^0) \cong \Omega^{fr}_1(pt) \cong \Z/2$. Let $f_0$ be a stable framing of $H^+(\mathbb{X})$ and let $f_1$ be the other stable framing. Then we have the following propositions:
\begin{proposition}\label{framing_dependence}
    Let $\psi_{-2}$ and $\psi_{\xi, -1}$ are the family invariant as we define in \cref{sec_case1} and \cref{sec_case2} respectively. 
    In case \eqref{closure} of \cref{main}, we assume that the Seiberg--Witten invariant of $(X', \mathfrak{s}')$ is odd. Then
    \begin{align}
        &\psi_{-2}(\mathbb{X}', \mathfrak{s}', f_0)+\psi_{-2}(\mathbb{X}', \mathfrak{s}', f_1)=1 \in \Z/2=\{0, 1\}, \label{formula1}\\
        &\psi_{\xi, -1}(\mathbb{X}, \mathfrak{s}, f_0)+\psi_{\xi, -1}(\mathbb{X}, \mathfrak{s}, f_1)=1 \in \Z/2=\{0, 1\}. \label{formula2}
    \end{align}
\end{proposition}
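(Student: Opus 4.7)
The plan is to identify the difference between the two stable framings with the real Hopf element $\eta \in \pi_1^{st}(S^0) \cong \Z/2\Z$ and then propagate this through the construction of $\Psi$ and the $\widetilde{KO}$-theoretic pullback of $\tau$.

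First, I would observe that the two stable framings of $H^+(\mathbb{X}) \to S^1$ differ by a clutching loop $\rho \colon S^1 \to O(b^+(X))$ representing the generator of $\pi_1(O) \cong \Z/2\Z$, so the resulting Thom self-map $Th(\rho) \colon S^{b^+(X)} \wedge (S^1)_+ \to S^{b^+(X)} \wedge (S^1)_+$ is the identity over the basepoint of $(S^1)_+$. Using the stable splitting $(S^1)_+ \simeq S^0 \vee S^1$ together with the $J$-homomorphism identity $J(\rho) = \eta$, this forces the stable decomposition $Th(\rho) = \mathrm{id} + \iota \circ \eta \circ p$, where $p \colon (S^1)_+ \to S^1$ collapses the disjoint basepoint and $\iota \colon S^0 \to (S^1)_+$ is its inclusion. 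Consequently $\Phi'(\mathbb{X}, \mathfrak{s}, f_1)$ is obtained from $\Phi'(\mathbb{X}, \mathfrak{s}, f_0)$ by post-composing with $Th(\rho) \wedge \mathrm{id}_{SWF(Y)}$.

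Next, I would feed this decomposition into the definitions in \cref{sec_case1} and \cref{sec_case2}. The composition $p \circ \Phi'(\mathbb{X}, \mathfrak{s}, f_0)$ is, by construction, the restriction of the family invariant to a single fiber, hence is the unparametrized relative Bauer--Furuta class of $(X, \mathfrak{s})$. Pairing with $\Phi(X^+, \mathfrak{s}'|_{X^+})$ via $\epsilon$ as in \cref{case1} (respectively with $\Phi(Y, \xi)$ as in \cref{case2}) and using biadditivity of the duality pairing then gives
\[
\Psi(\mathbb{X}, \mathfrak{s}, f_1) \;=\; \Psi(\mathbb{X}, \mathfrak{s}, f_0) \;+\; \eta \cdot \widehat{\Phi},
\]
where $\widehat{\Phi}$ denotes the unparametrized class $\Phi(X', \mathfrak{s}')$ in case \eqref{closure}, and the Iida--Taniguchi class $\epsilon(\Phi(X, \mathfrak{s}) \wedge \Phi(Y, \xi)) = \Phi(X, \xi, \mathfrak{s})$ in case \eqref{symplectic}.

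Finally, applying $\widetilde{KO}^n(-)$ and $\tau$-pullback, I would use that $\eta$-multiplication realises the isomorphism $KO^{-1}(pt) \to KO^{-2}(pt)$ in case \eqref{closure} and the mod-$2$ reduction $KO^0(pt) \to KO^{-1}(pt)$ in case \eqref{symplectic}, so the extra $\eta$-term contributes only to the $\psi_{-2}$ (resp.\ $\psi_{\xi, -1}$) summand. In case \eqref{closure}, the preceding proposition identifies $\widehat{\Phi}^*\tau$ with $SW(X', \mathfrak{s}') \bmod 2$; by the oddness hypothesis this equals $1$, yielding \eqref{formula1}. In case \eqref{symplectic}, the preceding proposition identifies $\widehat{\Phi}^*\tau$ with a generator of $KO^0(pt) \cong \Z$; its mod-$2$ reduction is $1$, yielding \eqref{formula2}.

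The main obstacle will be the first step: pinning down the stable decomposition $Th(\rho) = \mathrm{id} + \iota \circ \eta \circ p$ cleanly from the $J$-homomorphism, and verifying that the smash--duality pairing defining $\Psi$ is biadditive enough to extract the $\eta \cdot \widehat{\Phi}$ term on the nose in the stable homotopy group $\{\Sigma(S^1_+), S^1_+\}$ (resp.\ $\{S^1_+, S^1_+\}$). Once this naturality is in place, the $\widetilde{KO}$-calculation reduces to the two standard multiplication-by-$\eta$ maps noted above.
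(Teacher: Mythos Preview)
Your approach is correct in spirit and reaches the right conclusion, but it takes a genuinely different route from the paper, and one step in your justification is imprecise.

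\textbf{Comparison with the paper.} The paper never decomposes $Th(\rho)$ in the stable homotopy category. Instead it works entirely inside $KO$-theory: the two framings of $H^+(\mathbb{X})$ induce the two spin structures on the target bundle $E_n \to S^1$, and the corresponding $KO$-Thom classes differ by multiplication by the M\"obius line bundle $l \in KO^0(S^1)$. Since the underlying ``raw'' map $S^1 \times \R^n \to E_n$ is framing-independent, one gets directly
\[
\Psi(f_1)^*\tau_n \;=\; \bigl(\Psi(f_0)^*\tau_n\bigr)\cdot l
\]
and then a one-line ring computation in $KO^0(S^1) \cong \Z[l]/(l^2-1,\,2l-2)$ shows $(\psi_{-1},\pm 1)\cdot l = (\psi_{-1}+1,\pm 1)$. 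Your $\eta$-based argument is the stable-homotopy translation of this: $[l]-1$ generates $\widetilde{KO}^0(S^1)$ and corresponds to $\eta$ under $\widetilde{KO}^0(S^1)\cong KO^{-1}(pt)$, so multiplication by $l$ and your ``add $\eta\cdot\widehat{\Phi}$'' are the same operation seen through different lenses. The paper's route is shorter because it bypasses the splitting $(S^1)_+\simeq S^0\vee S^1$ and the biadditivity bookkeeping entirely.

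\textbf{The imprecision.} Your sentence ``$p \circ \Phi'(\mathbb{X},\mathfrak{s},f_0)$ is the restriction of the family invariant to a single fiber'' is not right as written: $p\colon (S^1)_+ \to S^1$ lands in $S^1$, not in $S^0$, so $p\circ\Phi'$ cannot literally be the unparametrized invariant. What you actually need for the identity $\Psi(f_1)=\Psi(f_0)+\eta\cdot\widehat{\Phi}$ is that the $(S^1,S^1)$-component of $\Psi(f_0)$ in the matrix decomposition of $\{(S^1)_+,(S^1)_+\}$ agrees with the $(S^0,S^0)$-component (the fiber degree $\widehat{\Phi}$). This holds because $\Psi(f_0)$ is a \emph{fiberwise} map over $S^1$; you should say this explicitly rather than invoking $p\circ\Phi'$. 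Once that is fixed, your $\eta$-multiplication step on $KO$ is correct and gives the result.
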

\begin{proof}
    The proof of \eqref{formula1} is similar to the proof of \eqref{formula2}. Here we prove \eqref{formula2}. 

    Let $E \to S^1$ be a oriented real vector bundle of rank $n$. Then the Thom class of $KO^{n}(E)$ depends on the choice of the spin structure of $E$. There are two spin structures on $E$ since $H^1(S^1, \Z/2) \cong \Z/2$. Let $\mathfrak{t}_E$ be one of the spin structure of $E$ and $\tau \in KO^{n}(E)$ be the corresponding Thom class. Then if we take the other spin structure on $E$, then the corresponding Thom class is $\tau \cdot l$ where $l$ is the M\"{o}bius line bundle. 
 
    Let $E_n$ be a real vector bundle $\underline{\R}^{\frac{\sigma(X)}{4}+\frac{1}{2}-d_3(-Y, [ \xi ])+n-1} \oplus H^+(\mathbb{X}) \to S^1$ where $n>>1$. Note that the rank of $E_n$ is $n$ since $-d_3(-Y, [\xi])=d_3(Y, [\xi])$ and $d_3(Y, [\xi])=-\frac{\sigma(X)}{4}-b^+(X)-\frac{1}{2}$ if $X$ is a compact almost complex bounding of $(Y, \xi)$. The framing $f$ of $H^+(\mathbb{X})$ defines a stable framing and a spin structure of $E_n$. Thus a representative of $\Psi_{\xi}(\mathbb{X}, \mathfrak{s}, f)$ is a proper map
    \[
        \Psi_{\xi}(\mathbb{X}, \mathfrak{s}, f)_n \colon S^1\times \R^n \to E_n \to S^1 \times \R^n.  
    \]
The last map depends on the choice of $f$. Let $\tau_n$ be a Thom class defined by the framing $f_0$. Then the Thom class defined by the framing $f_1$ is given by $\tau_n \cdot l$ where $l$ is the M\"{o}bius line bundle. Then 
\begin{align*}
    \Psi_{\xi}(\mathbb{X}, \mathfrak{s}, f_1)_n^*(\tau_n \cdot l)&=
    \Psi_{\xi}(\mathbb{X}, \mathfrak{s}, f_0)_n^*\tau_n \cdot l \\
    &=(\psi_{\xi, -1}(\mathbb{X},  \mathfrak{s}, f_0), \pm 1) \cdot l \in KO^0(S^1) 
\end{align*}
here we use the isomorphism 
\[
    KO^0(S^1) \cong \Z[l]/(l^2-1, 2l-2) \cong \Z/2\Z \cdot (l-1) \oplus \Z \cong KO^{-1}(pt) \oplus KO^0(pt). 
\]
Then we have 
\begin{align*}
(\psi_{\xi, -1}(\mathbb{X},  \mathfrak{s}, f_0), 1) \cdot l &=(\psi_{\xi, -1}(\mathbb{X},  \mathfrak{s}, f_0) + 1, 1).  
\end{align*}
This proves \eqref{formula2}. The proof of \eqref{formula1} is similar. 
\end{proof}

\section{Exotic diffeomorphisms}
In this section, we give a proof of \cref{main} and prove \cref{milnor_fiber}. 
\subsection{Proof of the main theorem}

\begin{proof}[Proof of \cref{main}.]

    Let us recall the assumptions in \cref{main}. 
    Let $X$ be a compact connected smooth spin $4$-manifold with boundary $Y$. We assume that $H_1(X, \Q)=0$ and $Y$ is a rational homology sphere. Let $f$ be a self-diffeomorphism of $X$ such that $f|_Y=id_Y$. We assume the following conditions:
\begin{itemize}
    \item $f^*=id$ on $H^*(X, \Z)$. 
    \item There is a self-diffeomorphism $\delta$ such that $\delta^2=f$ and $\delta|_Y=id_Y$.
    \item The dimension of the $-1$-eigenspace of $\delta^*$ in $H^+(X, \R)$ is even and not divisible by $4$.  
\end{itemize}
Furthermore, we assume one of the following two assumptions:

\begin{enumerate}
    \item There is a compact oriented smooth $4$-manifold $X^+$ such that $\partial X^+=-Y$ and $b^+(X \cup_Y X^+)>1$. Also we assume that there is a spin$^c$ structure $\mathfrak{s}$ on $X \cup_Y X^+$ such that $\mathfrak{s}|_X$ is a spin structure and the Seiberg--Witten invariant $SW(\mathfrak{s})$ is odd. 
    \item $X$ is symplectic and the canonical spin$^c$ structure induced by the symplectic structure coincides the spin$^c$ structure induced by a spin structure.  
\end{enumerate}
Let us assume the condition \eqref{symplectic}. The proof of the case \eqref{closure} is similar.  

We introduce three families: let $\mathbb{X}_0=S^1 \times X$ be a trivial family, $\mathbb{X}_{f}$ be the mapping torus of $f$, and $\mathbb{X}_{\delta}$ be the mapping torus of $\delta$. 
Let $\omega$ be the symplectic structure on $X$ and let us fix a spin structure $\mathfrak{s}$ that satisfies the condition \eqref{symplectic} i.e. $\mathfrak{s}$ is isomorphic to the canonical spin$^c$ structure $\mathfrak{s}_{\omega}$ as a spin$^c$ structure. 

We introduce family spin structures $\mathfrak{s}_0, \mathfrak{s}_f$, and $\mathfrak{s}_{\delta}$ on $\mathbb{X}_0, \mathbb{X}_f$, and $\mathbb{X}_{\delta}$ respectively by taking the mapping torus of $\mathfrak{s}$. Since all diffeomorphisms $id, f$ and $\delta$ are the identity on the boundary, there is a canonical lift of these maps to the spin structure. 

Since $f^*=id$ on $H^+(X, \R)$, then we can take a stable framing on $H^+(\mathbb{X}_0)$ and $H^+(\mathbb{X}_f)$ by fixing a basis of $H^+(X, \R)$. Let us call these framings $f_0$ and $f_{f}$ respectively. 

If $f$ is smoothly isotopic to the identity, we have 
\begin{align}\label{folse}
    \Psi_{\xi}(\mathbb{X}_0, \mathfrak{s}_0, f_0)=\Psi_{\xi}(\mathbb{X}_f, \mathfrak{s}_f, f_f) 
\end{align}
so that if we prove \cref{folse} is not true, then we proves the theorem. 

Let $\pi_2 \colon S^1 \to S^1$ be the double covering map. From the definition, we have 
\[
\pi_2^*\mathbb{X}_0 = \mathbb{X}_0,\; \pi_2^*\mathfrak{s}_0=\mathfrak{s}_0, \; \pi_2^*\mathbb{X}_\delta = \pi_2^*\mathbb{X}_f, \; \pi_2^* \mathfrak{s}_{\delta}=\mathfrak{s}_f.  
\]
Let $f_\delta$ be a framing of $H^+(\mathbb{X}_\delta)$. Then from the construction of the finite dimensional approximation, we have 
\[
\pi_2^* \Psi_{\xi}(\mathbb{X}_0, \mathfrak{s}_0, f_0)= \Psi_{\xi}(\mathbb{X}_0, \mathfrak{s}_0, f_0), \; \pi_2^*\Psi_{\xi}(\mathbb{X}_\delta, \mathfrak{s}_\delta, f_{\delta})=\Psi_{\xi}(\mathbb{X}_f, \mathfrak{s}_f, \pi_2^*f_\delta). 
\]
Note that $\pi_2^*f_{\delta}$ is a framing on $\pi_2^* H^+(\mathbb{X}_{\delta}) = H^+(\mathbb{X}_f)$. Therefore $\Psi_{\xi}(\mathbb{X}_0, \mathfrak{s}_0, f_0)^*\tau$ and $\Psi_{\xi}(\mathbb{X}_f, \mathfrak{s}_f, \pi^*f_{\delta})^*\tau$ are in the image of 
\[
\pi_2^* \colon KO^0(S^1) \to KO^0(S^1) 
\]
where $\tau$ is a Thom class of $KO^0(S^0) \cong \tilde{KO}^n(S^n(S^1))$. 
If  $l$ is a M\"{o}bius bundle, one can see that $\pi_2^*l=1$. Then $\pi^*_2$ is given by the map 
\[
\Z[l]/(2(l-1), (l-1)^2) \cong \Z/2 \oplus \Z \ni (a, b) \mapsto (0, b). 
\]
Thus we have $\psi_{\xi, -1}(\mathbb{X}_0, \mathfrak{s}_0, f_0)$ and $\psi_{\xi, -1}(\mathbb{X}_f, \mathfrak{s}_f, \pi_2^*f_\delta)$ is $0$ for any $f_{\delta}$. 

We only have to prove that 
\begin{align}\label{conclusion}
    \psi_{\xi, -1}(\mathbb{X}_f, \mathfrak{s}_f, \pi_2^*f_\delta) \neq \psi_{\xi, -1}(\mathbb{X}_f, \mathfrak{s}_f, f_f)
\end{align}
for some $f_{\delta}$. 
From the assumption on $\delta$, there is a natural isomorphism $H^+(\mathbb{X}_\delta) \cong \underline{\R}^{b^+(X)-4k-2} \oplus l^{4k+2}$ for some $k \ge 0$. Let $f_{2l}$ be a framing on $l \oplus l$. 
One can see that there is a natural isomorphism $\pi_2^* (l\oplus l) \cong S^1 \times \R^2$ since $l=\R \times [0, 1]/(x, 0) \sim (-x, 1)$. 
Moreover, the framing $\pi^*_2 f_{2l}$ of $\underline{R}^2 = S^1 \times \R^2$ defines a non-trivial element of $\pi_1(SO(2)) \cong \Z$ and its $\bmod 2$ reduction is also non-trivial. 
Let $f_{\delta}$ be a framing given by $f_{\text{standard}}^{b^+(X)-4k-2} \oplus f_{2l}^{2k+1}$ where $f_{\text{standard}}$ be the framing of $S^1 \times \R$ that is given by the trivialization. Then we see that $\pi^*_2 f_{\delta}$ is not homotopic to the framing $f_0$ since $f_0=f_{\text{standard}}^{b^+(X)}$. Therefore, from \cref{framing_dependence}, we have \eqref{conclusion}. 
\end{proof}

\subsection{Examples}\label{examples}
Let us prove \cref{milnor_fiber}. Let us recall the boundary Dehn twist on $M_c(p, q, r)$. 
The boundary of $M_c(p, q, r)$ is the Brieskorn sphere 
\[
\Sigma(p, q, r):= \{ (x, y, z) \in \C^3 \mid x^p + y^q + z^r=0, \lvert x \rvert^2 + \lvert y \rvert^2 + \lvert z \rvert^2=1 \}.
\]

We can define a $U(1)$-action on $\Sigma(p, q, r)$ by $u \cdot (x, y, z) = (u^{qr}x, u^{rp}y, u^{pq}z)$ for $u \in U(1)$. 
Let $[0, 1] \times \Sigma(p, q, r)\cong  C \subset M_c(p, q, r)$ be a collar neighborhood of the boundary $\partial C \cong \{0\} \times \Sigma(p, q, r)$. We identify $C$ as the cylinder $[0, 1] \times \Sigma(p, q, r)$.  We define
\[
f' \colon C \to C ,\;(t, (x, y, z)) \mapsto (t, e^{ i \rho(t)}(x, y, z))
\]
here $\rho \colon [0, 1] \to [0, 2\pi]$ be a smooth, non-decreasing  function with $\rho(t)=0$ if $0\le t \le 1/3$ and $\rho(t)=2\pi$ if $2/3 \le t \le 1$. Since $f'|_{\partial C}=id$, we can extend $f'$ to $M_c(p, q, r)$ by gluing to the identity on $M_c(p, q, r)\setminus C$. This is the definition of the diffeomorphism $f$.

\begin{lemma}\label{2qr}
    Let $\iota \colon M_c(2, q, r) \to M_c(2, q, r)$ is an involution that is given by $(x, y, z) \mapsto (-x, y, z)$. Then if $q, r$ are odd, coprime, and 
    \[
    \frac{(q-1)(r-1)}{4}
    \] is an odd number, there is a self diffeomorphism $\delta$ on $M_c(2, q, r)$ that satisfies the following properties:
    \begin{itemize}
        \item $\delta$ is the identity on the boundary.
        \item Let $f$ be the boundary Dehn twist. Then we have $\delta^2=f$.  
        \item The induced map $\delta^* \colon H^+(M_c(2, q, r), \R) \to H^+(M_c(2, q, r), \R)$ is order $2$ and the dimension of the eigenspace of eigenvalue $-1$ is even and not divisible by $4$.  
        \item $\delta$ commutes with the involution $\iota$. 
    \end{itemize}
\end{lemma}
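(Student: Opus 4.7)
The plan is to build $\delta$ explicitly as a ``half-twist'' on the collar glued to the involution $\iota$ on the interior. First I would set
\[
\delta(t,(x,y,z)) = (t,\, e^{i\rho(t)/2}\cdot(x,y,z))
\]
on the collar $C \cong [0,1] \times \Sigma(2,q,r)$, using the same $U(1)$-action that defines $f$, and extend by $\iota$ on the complement $M_c(2,q,r) \setminus C$. Because $q, r$ are odd, at $t=1$ the $U(1)$-action by $e^{i\pi}$ acts as $(x,y,z)\mapsto(-x,y,z)=\iota$, so the gluing is consistent, and smoothness follows from $\rho'(1)=0$. The four required properties then reduce to routine checks: $\delta|_{\partial M_c} = \mathrm{id}$ because $\rho(0) = 0$; $\delta^2 = f$ because $2\cdot(\rho(t)/2) = \rho(t)$ on the collar and $\iota^2 = \mathrm{id} = f$ on the interior; and $\delta\iota=\iota\delta$ holds on the collar (the $U(1)$-action commutes with $\iota$) and on the interior (where $\delta = \iota$).

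Next I would analyze $\delta^{*}$ on $H^{2}(M_c(2,q,r),\R)$. Observing that $\delta$ is freely homotopic to $\iota$ as a continuous self-map of $M_c(2,q,r)$ (deform the collar angle $\rho$ to the constant $2\pi$), we get $\delta^{*}=\iota^{*}$. Since $f$ is non-relatively isotopic to the identity, $f^{*}=\mathrm{id}$, so $(\delta^{*})^{2}=\mathrm{id}$ and $\iota^{*}$ is a genuine involution on cohomology. To compute its eigenspace decomposition, I plan to apply the Lefschetz fixed-point formula. The fixed set of $\iota$ is the surface $F = \{x=0\}\cap M_c$, a genus-$(q-1)(r-1)/2$ Seifert surface of the torus knot $T(q,r)$ with one boundary component, so $\chi(F) = 1 - (q-1)(r-1)$. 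Since the Milnor fiber has $H^{0}=\R$, $H^{1}=0$, $H^{2}=\R^{(q-1)(r-1)}$, $H^{3}=H^{4}=0$, we obtain $\mathrm{tr}(\iota^{*}|H^{2}) = L(\iota)-1 = -(q-1)(r-1)$. Combined with $\dim V_{+}+\dim V_{-}=(q-1)(r-1)$, this forces $V_{+}=0$. Hence $\iota^{*}$ acts as $-1$ on all of $H^{2}(M_c,\R)$, and in particular the $-1$-eigenspace of $\delta^{*}$ on $H^{+}$ equals $H^{+}$ itself, of dimension $b^{+}(M_c(2,q,r))$.

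Finally, I would show $b^{+}(M_c(2,q,r))\equiv 2\pmod 4$. We have $2b^{+} = (q-1)(r-1) + \sigma(M_c)$. Because $M_c(2,q,r)$ is spin with boundary the integer homology sphere $\Sigma(2,q,r)$, its intersection form is even and unimodular, and hence $\sigma(M_c)\equiv 0\pmod 8$. The hypothesis that $(q-1)(r-1)/4$ is odd is equivalent to $(q-1)(r-1)\equiv 4\pmod 8$, so $2b^{+}\equiv 4\pmod 8$, whence $b^{+}\equiv 2\pmod 4$.

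The main technical step is the Lefschetz calculation forcing $V_{+}=0$, which upgrades the statement ``$\delta^{*}$ is an involution'' to ``$\delta^{*}=-1$ on all of $H^{2}$''; once this is in place, the eigenspace dimension congruence reduces to the Rokhlin-type divisibility $\sigma\equiv 0\pmod 8$ for spin $4$-manifolds with integer homology sphere boundary.
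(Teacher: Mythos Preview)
Your construction of $\delta$ is identical to the paper's, and your verification of the first, second, and fourth bullet points matches as well. The arguments diverge in how you establish the third bullet (the eigenspace condition), and both routes are correct.

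For $\iota^{*}=-1$ on $H^{2}$, the paper invokes the identification of $M_c(2,q,r)$ as the double branched cover of $D^{4}$ along a pushed-in Seifert surface of $T(q,r)$; since $H^{2}(D^{4})=0$, the transfer map forces the $\iota$-invariant part of $H^{2}(M_c;\Q)$ to vanish. Your Lefschetz fixed-point computation reaches the same conclusion without appealing to the branched-cover description: the formula $L(\iota)=\chi(\operatorname{Fix}\iota)$ holds for any smooth involution on a compact manifold with boundary (e.g.\ via an equivariant CW structure, or by doubling), and your identification of $\operatorname{Fix}\iota$ with the curve Milnor fiber of $y^{q}+z^{r}$ is correct.

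For $\sigma(M_c)\equiv 0\pmod 8$, the paper identifies $\sigma(M_c)$ with the torus-knot signature $\sigma(T(q,r))$ via Viro and then cites the Hirzebruch--Zagier computation showing $t(q,r,2)\equiv 0\pmod 8$. Your argument is more elementary and self-contained: since $\Sigma(2,q,r)$ is an integral homology sphere the intersection form of $M_c$ is unimodular, and since $M_c$ is spin it is even, so van der Blij's lemma gives the $8$-divisibility directly. This avoids the explicit signature formula entirely.

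One small wording slip: in ``Since $f$ is non-relatively isotopic to the identity, $f^{*}=\mathrm{id}$'' you presumably mean that $f$, being supported in a collar, is \emph{freely} homotopic (indeed isotopic) to the identity, hence $f^{*}=\mathrm{id}$ on cohomology. With that corrected, your argument is complete.
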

\begin{proof}
    Let $\delta'$ be a diffeomorphism of the collar neighborhood $C \subset M_c(2, q, r)$ by 
    \[
        \delta' \colon C \to C \; ; \; (t, (x, y, z)) \mapsto (t, e^{i \frac{\rho(t)}{2}}(x, y, z)). 
    \]
    Then $\delta'$ is the identity on the boundary $\partial C \cong \{0\} \times \Sigma(2, q, r)$ and coincides with the involution $\iota$ on $\{1\} \times \Sigma(2, q, r)$. 
    Thus we can extend $\delta'$ to the diffeomorphism of $M_c(2, q, r)$ by gluing with $\iota$ on $\{1\} \times \Sigma(2, q, r)$. Let us call $\delta$ this extended diffeomorphism. 
    Since $\delta'$ commutes with $\iota$, one can see that $\delta$ commutes with $\iota$. From the definition of $f$ and $\delta$, $\delta^2=f$. 

    The induced map $\delta^* \colon H^+(M_c(2, q, r), \R) \to H^+(M_c(2, q, r), \R)$ is order $2$ because $\delta$ is homotopic to the involution $\iota$. 
    The induced map $\iota^*$ is $-1$ on $H^+(M_c(2, q, r), \R)$ since $\iota$ is the covering involution of the double branched cover of $D^4$ along a surface $S$ in $D^4$ that bounds $(q, r)$-torus with minimal $4$-genus.  
    Thus we only have to prove that the dimension of $H^+(M_c(2, q, r), \R)$ is even and not divisible by $4$ if the condition of $(q, r)$ is satisfied. From the standard argument of the algebraic topology, we see $b_2(M_c(2, q, r))=g(S)$ where $g(S)$ is the genus of $S$. Then $b^+(M_c(2, q, r))$ is given by 
    \[
    g(S)+\frac{\sigma(T(q, r))}{2} 
    \]
    where $\sigma(T(q, r))$ is the knot signature of $(q, r)$-torus knot. 
    From the Milnor conjecture, the $4$-genus of $T(q, r)$ is $\frac{(q-1)(r-1)}{2}$. From the result proved by Viro \cite{viro1973branched} implies that $\sigma(T(q, r))$ coincides with a value $t(q, r, 2)$ which was computed by Hirzebruch--Zagier \cite{hirzebruch1974atiyah}.  From \cite[Corollary,p.122]{hirzebruch1974atiyah}, we have $t(q, r, 2)$ is divisible by $8$ so that this proves the lemma.     
\end{proof}

Let us prove \cref{milnor_fiber}. 
\begin{proof}[Proof of \cref{milnor_fiber}]
    We want to apply \cref{main} to the boundary Dehn twist on $M_c(2, q, r)$ where $q, r$ are odd and coprime positive integer bigger than $2$. 
    Let us consider the embedding $i \colon M(p, q, r) \to \C^3$ in general. Since the  normal bundle of $M(p, q, r)$ is trivial as a complex line bundle so that the canonical bundle $K$ on $M(p, q, r)$ coincides with the pullback of the canonical bundle of $\C^3$ and this is trivial. 
    Thus we have that the canonical bundle $K$ on $M(p, q, r)$ is trivial. This implies that $M_c(p,q,r )$ is spin and the spin$^c$ structure induced by the K{\"a}hler form coincides with the spin structure. 
    Then we see that $M_c(p, q, r)$ satisfies the second condition \eqref{symplectic}. Moreover, $M(p, q, r)$ is simply connected since it is a affine algebraic surface and if $(p, q, r)$ are pairwise comprime, then $\Sigma(p, q, r)$ is homology sphere. 
    
    From \cref{2qr}, if $p=2$ and $q, r$ are odd, coprime integer bigger than $2$, one can see that $f$ and $\delta$ satisfies the conditions in \cref{main} therefore $f$ is not smoothly isotopic to the identity. 

    Since $M_c(2, q, r)$ is simply connected and $f^*$ acts trivially on the cohomology group of $M_c(2, q, r)$ so that \cite[Theorem A]{orson2022mapping} implies $f$ is topologically isotopic to the identity relative to the boundary. This proves the theorem. 
\end{proof}

\begin{proof}[Proof of \cref{milnor2}]
    Let us recall that if two triples $(a, b, c)$ and $(a', b', c')$ satisfy $a \le a'$, $b \le b'$, $c \le c'$, and $2 \le a ,b ,c$, then there is an embedding 
    \begin{align}\label{embedding_abc}
    i \colon M_c(a,b, c) \to M_c(a', b', c') . 
\end{align}
       A proof is given in the argument in \cite[pp264-265]{gompf20234}. 
    If $(p, q, r)$ satisfies the assumptions in the theorem, them we have an embedding $M_c(2, 3, 7) \to M_c(p, q, r)$. 
    Let $f'$ be the boundary Dehn twist on $M_c(p, q, r)$. 
    Since $f'$ is identity on the boundary, we can extend $f'$ to $M_c(p, q, r)$ by gluing with the identity on $M_c(p, q, r) \setminus M_c(2, 3, 7)$. 
    We write $f$ the extended diffeomorphism. 
    The induced map $f^*$ on the cohomology on $M_c(p, q, r)$ is the identity since $f$ is topologically isotopic to the identity. What we only have to do is to find a diffeomorphism $\delta$ that satisfies the assumptions in \cref{main} because one can see that other assumptions are satisfied from the argument in the Proof of \cref{milnor_fiber}. 

    Let $\delta$ be the diffeomorphism of $M_c(2, 3, 7)$ that is given in the proof of \cref{2qr}. 
    Since $\delta$ is the identity on the boundary, we extend $\delta$ to the diffeomorphism of $M_c(p, q, r)$ by gluing with the identity on $M_c(p, q, r) \setminus M_c(2, 3, 7)$. 
    One can see that $\delta^2=f$ on $M_c(p, q, r)$ so that the induced map $\delta^*$ on $H^+(M_c(p, q, r), \R)$ is an involution. Let us show that the $-1$ eigenspace of $\delta^*$ is isomorphic to $H^+(M_c(2, 3, 7), \R)$. 
    
    Firstly, let us prove that the map 
    \[
   i^* \colon H^2(M_c(a', b', c'), \R) \to H^2(M_c(a, b, c), \R)
    \]
    induced by the embedding \eqref{embedding_abc} is surjective and if the support of a diffeomorphism $\delta$ is contained in $M_c(a, b, c)$ then $\delta^*$ acts trivially on the kernel of $i^*$. 
    It is enough to show above in the case that $a=a', b=b'$ and $c <c'$. From the argument in \cite[pp264-265]{gompf20234}, $M_c(a, b, c')$ is given by the fiber-sum of $M_c(a, b, c)$ and $M_c(a, b, c'-c)$. The intersection $A:=M_c(a, b, c) \cap M_c(a, b, c'-c) \subset M_c(a, b, c')$ is homeomorphic to 
    \[
    [0, 1] \times \{ (x, y) \in \C^2 \mid x^a +y^b =\epsilon , \; \lvert x \rvert^2 + \lvert y \rvert^2\le1\}
    \]
    for some small $\epsilon \in \C\setminus \{0\}$. Therefore $H^2(A, \R)=0$. Since $M(a, b, c)$ and $M(a, b, c'-c)$ are simply connected, the Mayer--Vietoris exact sequence induce the following short exact sequence:
    \[
    0 \to H^1(A, \R) \to H^2(M_c(a, b, c'), \R) \to H^2(M_c(a, b, c), \R) \oplus H^2(M_c(a, b, c'-c), \R) \to 0. 
    \]
    Therefore the map $ i^* \colon H^2(M_c(a, b, c'), \R) \to H^2(M_c(a, b, c), \R)$ is surjective and the above sequence is funtorial so that $\delta^*$ is trivial on the kernal of $i^*$. 

    From the above argument, if $(p, q, r)$ satisfies the assumptions of the statement of \cref{milnor2}, then 
    \[
    H^2(M_c(p, q, r), \R) \to H^2(M_c(2, 3, 7), \R)
    \]
    is surjective and the $-1$ -eigenspace of $\delta^*$ is isomorphic to  $H^2(M_c(2, 3, 7), \R) \cong H^2(M_c(p, q, r),\partial M_c(p, q, r), \R)$. 
    If $\xi , \eta \in H^2(M_c(p, q, r),\partial M_c(p, q, r), \R)$, $\delta^* \xi = -\xi$, $\delta^* \eta=\eta$, then $\delta^* (\xi \wedge \eta)=\delta^*\xi \wedge \delta^*\eta=-\xi \wedge \eta$ but $\delta^*$ is the identity on $H^4(M_c(p, q, r), \partial M_c(p, q, r),\R)$. Thus $\xi \wedge \eta=0$. This argument implies the intersection form $\sigma(M_c(p, q, r))$ has the following splitting:
    \[
        \sigma(M_c(p, q, r)) \cong \sigma(M_c(2, 3, 7)) \oplus \sigma(M_c(p, q, r))|_{\ker(H^2(M_c(p, q, r), \R) \to H^2(M_c(2, 3, 7), \R))}. 
    \]
    Therefore the $-1$-eigenspace of $\delta^*$ in $H^+(M_c(p, q, r), \R)$ is isomorphic to $H^+(M_c(2, 3, 7), \R)$. 
    Since $\dim_{\R}(H^+(M_c(2, 3, 7), \R)) = 2$, this proves \cref{milnor2}. 
\end{proof}

\bibliographystyle{jplain}
\bibliography{bibidatabase}

\end{document}